\newtheorem{lemma}{Lemma}[section]
\newtheorem{definition}[lemma]{Definition}
\newtheorem{remark}[lemma]{Remark}
\def\Z{\mathbb Z}
\def\R{\mathbb R}
\def\C{\mathbb C}
\def\id{\mathrm{id}} 
\def\be{\begin{equation}}
\def\ee{\end{equation}}
\title{Twisted reality condition for spectral triple \\ on two points}
\author{Ludwik D\k{a}browski${}^1{}^*$}
\author{Andrzej Sitarz${}^{2,3}$\thanks{L.D\k{a}browski and A.Sitarz acknowledge 
support of the NCN grant 2012/06/M/ST1/00169}}
\affil{${}^1$ SISSA (Scuola Internazionale Superiore di Studi Avanzati), Via Bonomea 265, 34136 Trieste, Italy, \\
\it E-mail: {dabrow@sissa.it} 
}
\affil{${}^2$ Institute of Physics, Jagiellonian University,\\
prof. Stanisława {\L}ojasiewicza 11, 30-348 Krak\'ow, Poland,}
\affil{${}^3$ Institute of Mathematics of the Polish Academy of Sciences, \\ 
\'Sniadeckich 8, Warszawa, 00-950 Poland. \\
\it E-mail: {andrzej.sitarz@uj.edu.pl}  
}
\begin{document}
\maketitle

\begin{abstract}Lowest dimensional spectral triples with twisted reality condition over the function 
algebra on two points are discussed.  The gauge perturbations (fluctuations), 
chiral gauge perturbations, conformal rescalings, and permutation of the two points are 
presented.
\end{abstract}

\hbox{2010 Mathematics Subject Classification: {58B34, 58B32, 46L87}}

\section{Introduction}

Real spectral triples should be understood as a noncommutative generalisation of spin manifolds, as
proposed by Connes in \cite{Connes:1995} and demonstrated later in the reconstruction theorem
\cite{Connes:2013}. In a recent paper \cite{BCDS16}, motivated by some examples of conformally rescaled noncommutative Dirac operators as well as examples arising from the quantum deformations we have proposed a new
definition of reality condition, which includes a twist by an automorphism of the algebra\footnote{
remaining in the framework of {\em bona fide} spectral triples in contrast to twisted spectral triples 
of the interesting recent paper \cite{LaMa}}.

In this note we study the applications and consequences of the proposed definition for the finite
spectral triple over the algebra of functions on two points, which is often taken as the simplest
toy model for an almost noncommutative geometry and used to show the principles which are
behind the noncommutative description of the Standard Model as arising from a discrete noncommutative
geometry.
 
We will discuss systematically the lowest dimensional representations and the space of all Dirac operators 
with twisted and untwisted reality condition satisfied, due to the gauge perturbations (fluctuations), 
chiral gauge perturbations, conformal rescalings, and permutation of the two points. The formula for the distance between the two points will be presented.
 
\section{Twisted reality}

Let us recall the notion of real spectral triples with the twisted first order condition.
\begin{definition}\label{def.reality}
Let $\nu\in Aut(H)$ be a selfadjoint invertible operator on  a Hilbert space $H$.
Let $A$ be a complex $*$-algebra of operators on $H$,  which is left invariant by the 
automorphism $\nu$, i.e. $\nu^{-1} a \nu\in A$ for any $a\in A$. 
We say that the spectral triple $(A,H,D)$ admits 
a {\em $\nu$-twisted real structure} if there exists an anti-linear map $J: H\to H$ such 
that $J^*J=\id$, $J^2 = \epsilon\, \id$, and, for all $a,b\in A$,
\begin{equation}\label{o0c}
 [a, JbJ^{-1}] =0, 
\end{equation}
\begin{equation}\label{to1c}
 [D,a] J {\nu}^{-2}b\,{\nu}^{2}J^{-1} = J b\,J^{-1}[D,a], 
\end{equation}
\begin{equation}\label{tc}
DJ\nu = \epsilon' \nu JD, 
\end{equation}
\begin{equation}\label{treg}
\nu J\nu = J, 
\end{equation}
where $\epsilon,\epsilon' \in \{+1,-1\}$. 

If $(A,H,D)$ admits a grading operator 
$\gamma: H\to H$, $\gamma^*=\gamma$, $\gamma^2 = \id$, $[\gamma,a] =0$, for all $a\in A$, $\gamma D = -D\gamma$, 
and $\nu^2 \gamma = \gamma\nu^2 $,
then the twisted real structure $J$ is also required to satisfy
\begin{equation}\label{gc}
\gamma J = \epsilon''J\gamma,
\end{equation}
where $\epsilon''$ is another sign.
\end{definition}

This is a particular instance of Definition 2.1 in \cite{BCDS16}. 
The signs $\epsilon, \epsilon',\epsilon''$ determine 
the $KO$-dimension modulo 8 in the usual way 
\cite{Connes:1995} and the operator $J$ is antiunitary.
In such a case we shall say that a spectral triple 
admits a {\em $\nu$-twisted real structure},
or simply that it is a {\em $\nu$-twisted real spectral triple}. 

Often \eqref{o0c} is called the {\em order-zero condition}, \eqref{to1c} is called the {\em twisted order-one condition},  while  we shall refer to \eqref{tc} as to the {\em twisted $\epsilon'$-condition}, and to \eqref{treg} as to the {\em twisted regularity} 
(with the adjective twisted omitted if $\nu = \hbox{id}$).

Let $\Omega^1_D$ be a bimodule of one-forms:
$$ \Omega^1_D :=\left\{ \sum_i \pi(a_i)[D,\pi(b_i)], \,\,a_i,b_i\in A \right\}, $$
where the sum is finite. 
By a fluctuated Dirac operator $D_\alpha$ we mean
$$ 
D_\alpha:= D + \alpha + \epsilon' \nu J \alpha J^{-1} \nu,
$$
with the requirement that $\alpha $
is selfadjoint. 
We shall often use the shorthand notation $\alpha '=  \nu J \alpha J^{-1} \nu$.
As shown in \cite{BCDS16}, $(A,H,D_\alpha)$ with (the same) $J$ is also a $\nu$-twisted real spectral triple. 
If $(A,H,D)$ is even with grading $\gamma$,
then $(A,H,D_\alpha)$ is even with (the same) 
grading $\gamma$.
Furthermore 
$$\Omega^1_{D_\alpha}=\Omega^1_D $$
and the twisted fluctuations with composition  form a semigroup. 
They correspond to "gauge transformations" in physics.

Note that it is also possible to modify the Dirac operator by a chiral gauge perturbation of the form:
\be\label{chirg}
D_A^\gamma = D + \gamma A + \epsilon' J \gamma AJ^{-1}, \ee
where $A\in \Omega^1_D$ is antihermitian. 

Another type of transformations are conformal rescalings by a positive element taken originally \cite{CoTr} from the algebra $A$, or \cite{DaSi15}
from $J AJ^{-1}$.
Here we take $k \in A$ to be not only positive but also  
invertible and such that $k^{-1}$ is also bounded. Denote $k_J := J k J^{-1}$.
As shown in \cite{BCDS16}, given a real spectral triple
$(A,H,D,J)$, for
$$D_{k_J} =  k_J D k_J, \qquad \nu(h) = (k^{-1} k_J) \, (h),$$
the datum $(A, H, J, D_{k_J}, \nu)$ is a $\nu$-twisted real spectral triple.
If furthermore $(A,H,D,J)$ is even with grading $\gamma$,
then $(A,H,D_{k_J}, J, \nu)$ is even with (the same) grading $\gamma$,
and has the same KO-dimension  as $(A, H, D, J)$. 
Furthermore, if  $(A,H,D, J, \nu)$ is a $\nu$-twisted real spectral triple which satisfies the 
twisted order-one condition, then, for all $k$ as above
such that $\bar{\nu}(k k_J) = k k_J$,  $(A, H, D_{k_J}, J, \mu)$ is a $\mu$-twisted real spectral triple, 
where 
$$
D_{k_J} =  k_J D k_J, \qquad \mu(h)  =  k_J \nu k^{-1}\, (h).
$$
The grading $\gamma$, if exists, is again unchanged.

We also note that 
$$\Omega^1_{D_{k_J}}=\Omega^1_D ,$$
and conformal twist by $k$ of gauge fluctuated by $A$  (untwisted) real spectral triple is 
a gauge fluctuation by $B=k^{-1}Ak^{-1}$
 of conformally twisted by $k$ spectral triple. In particular, with $\nu$ as in \eqref{dist}, we have
for the corresponding Dirac operators  
$$
D_{k_J}+A + \epsilon' \nu JAJ^{-1} \nu =  (D+ B + \epsilon' JBJ^{-1})_{k_J},
$$
provided that $A = kBk$. \\

Before we proceed with the simplest examples of finite spectral triples let us recall and important notion
of irreducibility of a (real) spectral triple. 
\begin{definition}
A real spectral triple over an algebra $A$ is irreducible if the representation of algebra 
generated by $\gamma, a, [D,b]$, $a,b \in  A$ is not reducible.
\end{definition}
Note that this definition obviously implies irreducibility of all the data (a weaker condition).
In particular, it is easy to find examples of classical spectral triples over manifolds, 
which are irreducible with $J$ included but are reducible in the sense of the definition above.
In this case $H$ would correspond to "charge $n$ plus $-n$" fields while in our case to "charge zero" fields.  

In this paper we shall study the simplest (nontrivial) possible example, which is the algebra of functions over two points. 
We shall work with its faithful representation on a Hilbert space, or what is the same with its isomorphic copy of operators.
We will be interested whether it admits an irreducible real spectral triple and discuss the space of all Dirac operators 
with twisted and untwisted reality condition satisfied. 
 
\section{A spectral triple over two points}

The $*$-algebra $A_2$ of complex valued functions over two points is 
isomorphic to $\C^2=\{(c_+,c_-)\}$ with $*$ acting as the complex conjugation.
We will use the vector space basis given by the projection $e=e^*=e^2=(1,0)$ and $1-e$,
where $1$ is the unity (identity) element. 
An arbitrary element of the algebra can be written as
$$ a = c_+ \, e + c_- \, (1-e).$$

Before we start with the construction of spectral triples let us recall the following fact:

\begin{lemma}
The only nondegenerate first order differential calculus over $A_2$ is 
the universal differential calculus, with:
\begin{equation}
 d \left( c_1 \, e + c_2 \, (1-e) \right) = c_1 \, de - c_2 de, 
\end{equation}
with the bimodule of one-forms generated by $de$. 
\begin{equation}
e \, de = de \, (1-e). \label{ede}
\end{equation}
The involution on the algebra extends to the one forms: $de^* = -de$.
\end{lemma}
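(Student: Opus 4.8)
The first-order differential calculus here is the universal one, and the claim is really that over the two-point algebra $A_2 = \C e \oplus \C(1-e)$ there is essentially no choice: any nondegenerate $A_2$-bimodule $\Omega$ equipped with a derivation $d:A_2\to\Omega$ coincides (up to the obvious normalization) with the quotient of the universal calculus $\Omega^1_u(A_2)$, which is itself one-dimensional over each point. I would begin by recalling that the universal first-order calculus $\Omega^1_u(A_2)$ is the kernel of the multiplication map $A_2\otimes A_2\to A_2$, with $d a = 1\otimes a - a\otimes 1$; a quick dimension count gives $\dim_\C A_2\otimes A_2 = 4$, $\dim_\C A_2 = 2$, so $\dim_\C\Omega^1_u(A_2) = 2$. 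One checks directly that $\Omega^1_u(A_2)$ is generated as a bimodule by the single element $de = 1\otimes e - e\otimes 1$, and that $e\,de = (1\otimes e - e\otimes 1)(1-e) = de\,(1-e)$ (since $e^2 = e$), which is precisely \eqref{ede}; symmetrically $(1-e)\,de = de\,e$, so $e\,de\,e = 0 = (1-e)\,de\,(1-e)$ and the two "off-diagonal" components $e\,\Omega\,(1-e)$ and $(1-e)\,\Omega\,e$ each have dimension one.

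Next, for an arbitrary first-order calculus $(\Omega, d)$ over $A_2$, the universal property of $\Omega^1_u(A_2)$ gives a surjective bimodule map $\pi:\Omega^1_u(A_2)\to\Omega$ intertwining the differentials, so $\Omega$ is a bimodule quotient of $\Omega^1_u(A_2)$. Since $\Omega$ decomposes as $\Omega = e\Omega e \oplus e\Omega(1-e)\oplus(1-e)\Omega e\oplus(1-e)\Omega(1-e)$, and the Leibniz rule forces $e\,de\,e = e(d(e^2))e - \dots$, one sees $a\,db$ always lands in the off-diagonal part; so $\Omega$ (being generated by $d A_2$ as a bimodule) lies in its off-diagonal part, and is a quotient of a two-dimensional module with one-dimensional off-diagonal components. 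The key point is then \emph{nondegeneracy}: I would take this to mean that $d$ is nonzero on each point (equivalently, $\pi(de)\neq 0$ on both off-diagonal components, equivalently $A_2$ acts faithfully on $\Omega$ in the appropriate sense). Under this hypothesis neither one-dimensional component of $\Omega^1_u(A_2)$ can be killed by $\pi$, so $\pi$ is an isomorphism and $\Omega\cong\Omega^1_u(A_2)$, with $de$ generating it and \eqref{ede} holding.

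Finally, the formula $d(c_1 e + c_2(1-e)) = c_1\,de - c_2\,de = (c_1 - c_2)\,de$ follows from $d1 = 0$ and linearity of $d$ applied to $c_1 e + c_2(1-e) = (c_1-c_2)e + c_2 1$. For the involution, since $e^* = e$ and $d$ is a derivation compatible with $*$ in the sense $d(a^*) = (da)^*$, we get $de^* = (de)^*$; but also $e = e^2$ and in the universal calculus $(de)^* = -de$ can be read off from $(1\otimes e - e\otimes 1)^* = e^*\otimes 1 - 1\otimes e^* = e\otimes 1 - 1\otimes e = -de$ under the natural involution on $A_2\otimes A_2$. I expect the only genuinely delicate point to be pinning down what "nondegenerate" means and checking that it is exactly the condition ruling out the proper quotients of $\Omega^1_u(A_2)$ (namely the two one-dimensional calculi where one of $e\,de$ or $(1-e)\,de$ is set to zero); everything else is a short bimodule-structure computation over $\C^2$.
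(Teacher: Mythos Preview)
The paper does not actually prove this lemma: it is introduced with ``let us recall the following fact'' and stated without proof, then used immediately via its corollary \eqref{eDe}. So there is no paper proof to compare against; you are supplying an argument where the authors simply cite a known result.

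Your argument is essentially correct and standard: identify $\Omega^1_u(A_2)=\ker(m:A_2\otimes A_2\to A_2)$ as two-dimensional, use the Leibniz rule on $e=e^2$ to see $e\,de\,e=0=(1-e)\,de\,(1-e)$ so that everything lives off-diagonal, invoke universality to realise any calculus as a quotient, and then let nondegeneracy forbid killing either one-dimensional off-diagonal piece. You are right that the only genuinely soft point is the meaning of ``nondegenerate''; your reading (both off-diagonal components survive, equivalently $A_2$ acts faithfully on $\Omega$ from both sides) is the one that makes the statement true, and is consistent with how the paper uses the lemma.

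One small inconsistency: you first write ``$d$ is a derivation compatible with $*$ in the sense $d(a^*)=(da)^*$'', which for $e=e^*$ would give $(de)^*=de$, and then you compute in the universal calculus (with the flip involution $(x\otimes y)^*=y^*\otimes x^*$) that $(de)^*=-de$. The second computation is the correct one and matches the paper's convention (coming from $[D,a]^*=-[D,a^*]$ for selfadjoint $D$); the first sentence should read $(da)^*=-d(a^*)$. Fix that sign and the proof is clean.
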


As a corollary, for any given $D$, we have
\begin{equation}\label{eDe}
e[D,a]=[D,a](1-e), \quad \forall  a\in A.
\end{equation}

Furthermore a general one-form $A\in \Omega^1_D$ can be parametrized 
by two complex numbers $\phi_1, \phi_2$ as follows 
$$ A = \left(\phi_1 e + \phi_2 (1 - e)\right) [D,e] 
$$
The selfadjoint $A=A^*$ correspond to $\phi_2 = -\phi_1^*$.

Note also that for $a = c_+ e + c_- (1-e)$ we have:
\be\label{[D,a]}
[D, a] = (c_+-c_-)[D, e]. 
\ee
Recalling the expression for the distance between the two points in terms of the spectral data:
\be\label{dist}
d_D:=sup_{a\in A_2}\{|c_+-c_-|\,|\, \|[D,a]\|\leq 1 \}
\ee
from \eqref{[D,a]} we thus obtain that 
\be\label{dist1}
d_D= \frac{1}{\|[D,e]\|}.
\ee

We observe that the algebra $A_2$ has only one nontrivial automorphism corresponding 
to the permutation of the two points, $\nu(e) = 1-e$, 
so that $\nu^2 = \id$. Hence, the twisted order one condition \eqref{to1c} is identical to the 
usual order one condition. The same holds for the commutation relation of $\nu^2$ with $\gamma$.
 However the  $\epsilon'$-condition \eqref{tc} and the twisted regularity \eqref{treg} are different 
in the untwisted and  twisted case, which influences the  possible form of the other data.

\subsection{The minimal representation ($\C^2$)}

The lowest possible dimension of a faithful representation of the algebra $A_2$ is $2$. 
The canonical action (faithful representation) of $A_2$ on $\C^2$ is 
through diagonal matrices in $M_2(\C)$. Therefore $A_2$ is necessarily its own 
commutant and although there are several inequivalent possibilities for the real 
structure $J$, for all of them we necessarily have that $J a J^*$ is an element of the 
algebra itself. Therefore, the order one condition (\ref{to1c}) in its twisted or untwisted
version would contradict the bimodule relation between algebra elements and 
one-forms (\ref{eDe}). 

Hence, we reach the following conclusion:

\begin{lemma}
There are no irreducible real spectral triples (twisted or untwisted) with the
Hilbert space $H=\C^2$ over the algebra $A_2=\C^2$,
that give a non-zero differential calculus.
\end{lemma}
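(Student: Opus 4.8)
The plan is to argue by contradiction: suppose $(A_2, \C^2, D, J)$ is an irreducible real spectral triple (twisted or not) with nonzero calculus, i.e. $[D,e]\neq 0$. First I would record that on $\C^2$ the faithful representation of $A_2$ is by diagonal matrices, so $A_2$ equals its own commutant in $M_2(\C)$; hence for \emph{any} antiunitary $J$ the operator $JeJ^{-1}$, which commutes with all of $A_2$ by the order-zero condition \eqref{o0c}, must itself lie in $A_2$. Thus $JeJ^{-1}$ is a diagonal projection: either $JeJ^{-1}=e$, or $JeJ^{-1}=1-e$, or $JeJ^{-1}\in\{0,1\}$ (the last being excluded since $J$ is invertible and $e$ is a rank-one projection, so $JeJ^{-1}$ has rank one). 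So it suffices to treat the two cases $JeJ^{-1}=e$ and $JeJ^{-1}=1-e$.

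Next I would feed this into the twisted order-one condition \eqref{to1c} with the choice $a=b=e$. Since $\nu^2=\id$ on $A_2$ (the only nontrivial automorphism is the flip, which squares to the identity), the twisted order-one condition collapses to the ordinary one, $[D,e]\,(JeJ^{-1}) = (JeJ^{-1})\,[D,e]$. In the case $JeJ^{-1}=e$ this reads $[D,e]\,e = e\,[D,e]$, while the calculus relation \eqref{eDe} gives $e[D,e] = [D,e](1-e)$, i.e. $e[D,e]=[D,e]-[D,e]e$; combining, $[D,e]e = [D,e] - [D,e]e$, so $2[D,e]e=[D,e]$, and then also $e[D,e] = [D,e]e$ forces, together with \eqref{eDe}, the relation $[D,e]=0$ after a short diagonal-matrix computation. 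In the case $JeJ^{-1}=1-e$ one gets $[D,e](1-e)=(1-e)[D,e]$, which combined with \eqref{eDe} (which says $[D,e](1-e)=e[D,e]$) yields $e[D,e]=(1-e)[D,e]$, again forcing $[D,e]=0$. Either way we contradict the assumption that the calculus is nonzero.

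A slightly cleaner way to package the computation, which I would probably use in the writeup, is to work at the level of the $2\times 2$ matrix for $[D,e]$: writing $[D,e]=\bigl(\begin{smallmatrix} x & y\\ z & w\end{smallmatrix}\bigr)$ with $e=\bigl(\begin{smallmatrix}1&0\\0&0\end{smallmatrix}\bigr)$, the relation $e[D,e]=[D,e](1-e)$ from \eqref{eDe} immediately gives $x=w=0$ and $z=0$, so $[D,e]$ is strictly upper triangular with only the entry $y$; then either commutation relation coming from the order-one condition in the two cases above forces $y=0$ as well. I would present it this way and note that the irreducibility hypothesis is what lets us take $H=\C^2$ with this specific diagonal representation rather than a multiple of it.

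The main obstacle — really the only subtlety — is making sure the case analysis for $JeJ^{-1}$ is exhaustive and correct: one must use that $J$ is antilinear and invertible (so conjugation by $J$ is a bijection of $M_2(\C)$ preserving rank and the property of being an idempotent), hence $JeJ^{-1}$ is a rank-one idempotent commuting with $A_2$, and the only such elements of the diagonal algebra $A_2$ are $e$ and $1-e$. Everything else is the routine diagonal-matrix bookkeeping sketched above, and it is worth remarking explicitly (as the surrounding text does) that the conclusion is insensitive to the signs $\epsilon,\epsilon',\epsilon''$ and to whether or not a grading $\gamma$ is present, since we never needed \eqref{tc}, \eqref{treg}, or \eqref{gc}.
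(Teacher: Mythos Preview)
Your approach is exactly the paper's: the argument there is the short paragraph preceding the lemma, observing that $A_2$ is its own commutant in $M_2(\C)$, so $JaJ^{-1}\in A_2$, and then the order-one condition clashes with the bimodule relation \eqref{eDe}. Your write-up simply fills in the case analysis the paper leaves implicit.

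One small slip in your ``cleaner'' matrix version: the relation \eqref{eDe} gives $x=w=0$ but does \emph{not} give $z=0$. Indeed, for a general $D$ one has $[D,e]=\bigl(\begin{smallmatrix}0&y\\ z&0\end{smallmatrix}\bigr)$ with both off-diagonal entries possibly nonzero (and \eqref{eDe} is then automatic). The order-one step still finishes the job: if $JeJ^{-1}=e$ then $[D,e]e=e[D,e]$ reads $\bigl(\begin{smallmatrix}0&0\\ z&0\end{smallmatrix}\bigr)=\bigl(\begin{smallmatrix}0&y\\ 0&0\end{smallmatrix}\bigr)$, so $y=z=0$; if $JeJ^{-1}=1-e$ then $[D,e](1-e)=(1-e)[D,e]$ reads $\bigl(\begin{smallmatrix}0&y\\ 0&0\end{smallmatrix}\bigr)=\bigl(\begin{smallmatrix}0&0\\ z&0\end{smallmatrix}\bigr)$, again $y=z=0$. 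So the conclusion stands, just correct the claim about what \eqref{eDe} alone yields.
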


Of course, if we discard the real structure altogether we still have an irreducible
spectral triple.

\section{The minimal even spectral triples ($\C^3$)}

The first possibility to have a nontrivial real (even) spectral triple with a faithful representation of $A_2$, is 
on $\C^3$ with the  $\Z_2$-grading $\gamma$,  which we take in the diagonal form 
$$
\gamma = \left( \begin{array}{ccc} 
1 & 0 & 0  \\ 0 & -1  & 0 
\\ 0 & 0 & -1  
\end{array}  \right).
$$
and the representation of $A_2$:
$$ 
A_2 \ni a = \left( \begin{array}{ccc} c_+ & 0 & 0 \\   0 & c_+ & 0 
\\ 0 & 0 & c_- \end{array}  \right).
$$
The Dirac operator is a priori an arbitrary selfadjoint matrix in $M_3(\C)$, however, using
its anti-commutation relation with $\gamma$ we are restricted to the following form:

$$ 
 D  = \left( \begin{array}{cccc} 
 0 & d_2 & d_1 \\
 d_2^* & 0 & 0  \\
 d_1^* & 0 & 0 
 \end{array} \right),
$$
where $d_1, d_2 \in\C$.

\subsection{The distance formula}
Before we pass to real structures and the restrictions on $D$ imposed by 
\eqref{to1c} and \eqref{tc} we compute the one-forms $A \in \Omega^1_D$ associated to $D$. 
For that it suffices to compute
\be
\label{[D,e]3}
 [D,e] = \left( \begin{array}{ccc} 
 0 & 0 & -d_1  \\
 0 & 0 & 0 \\
 d_1^* & 0 & 0 
  \end{array} \right),
\ee
where 
$$ e =  \left( \begin{array}{ccc} 
   1 & 0 &  0 \\ 
   0 & 1  & 0  
\\ 0 & 0 & 0  
 \end{array}  \right). $$
Then it follows that a general one-form $A$ can be parametrized 
by two complex numbers $\phi_1, \phi_2$ as follows 
$$ A = 
 \left( \begin{array}{ccc} 
 0 & 0 & -\phi_1 d_1  \\
 0 & 0 & 0  \\
 \phi_2 d_1^* & 0 & 0  
 \end{array} \right) . 
$$
The selfadjoint $A=A^*$ correspond to $\phi_2 = -\phi_1^*$.

Since the norm of $[D,a]$ is
$$ || [D,a] || = |c_+ - c_-|\, |d_1| ,$$
it follows then that the distance \eqref{dist} between the two points equals 
\be\label{dist3}
d_D= \frac{1}{\|[D,e]\|}= \frac{1}{|d_1|}.
\ee

\subsection{Real structure}

It is easy to see that $[[D,e],e]\neq 0$ unless $d_1=d_2=0$. Thus to have the first 
order condition satisfied, $J$ cannot commute with $e$ and  we cannot have 
$J A_2 J\subset A_2$. 
The only (up to a simple rescaling) candidate for such $J$ is,
$$ 
J = \left( \begin{array}{ccc} 
  1 & 0 & 0 
\\ 0 & 0  & 1 
\\ 0 & 1 & 0  
 \end{array}  \right) \circ *.
$$
It satisfies $J^2=\id$ and $J \gamma = \gamma J$.
We note that $Ja^*J$ becomes
$$ 
J a^* J = \left( 
\begin{array}{ccc} 
c_+ & 0 & 0  \\ 0 & c_-  & 0 
\\ 0 & 0 & c_+  
\end{array}  \right).
$$

Now, concerning the order one condition, it suffices  to be checked only for 
$a=b=e$.  Indeed a simple computation shows that it is satisfied:
$$ [[D, e], JeJ^{-1}] = 0. $$
independently of any other requirements.

However, to have a family o real (possibly twisted) spectral triples, we need to investigate
possible twists.

\subsection{A family of real, conformally twisted, spectral triples.}

Let us begin by finding a family of Dirac operators compatible with $J$. 
We have:
$$ 
 D J  = 
 \left( \begin{array}{ccc} 
 0 & d_1 & d_3  \\
 d_3^* & 0 & 0  \\
 d_1^* & 0 & 0 
 \end{array} \right) \circ *, \quad \quad
 J D  = 
 \left( \begin{array}{ccc} 
 0 & d_3^* & d_1^*  \\
 d_1 & 0 & 0   \\
 d_3 & 0 & 0  
 \end{array} \right)  \circ *, $$
 so, imposing the commutation relation $JD = \epsilon' DJ$ (the $\epsilon'$-condition), 
we obtain:
 $$ d_3 = \epsilon' d_1^*. $$
 
The self-adjoint one-forms
\be\label{A3} 
A = \left(\phi e  - \phi^* (1 - e)\right) [D,e] = 
 \left( \begin{array}{ccc} 
 0 & 0 & -\phi d_1  \\
 0 & 0 & 0  \\
- \phi^* d_1^* & 0 & 0  
 \end{array} \right) 
\ee
allow to fluctuate the Dirac operator by a real perturbation:
$$ A + \epsilon' JAJ^{-1} = 
- \left( \begin{array}{ccc} 
 0 & \epsilon' \phi^* d_1^*  & \phi d_1 \\
\epsilon' \phi d_1 & 0 & 0 \\
 \phi^* d_1^* & 0 & 0 
 \end{array} \right).$$
 
Hence starting from the Dirac operator $D$ with parameters $d_1$, 
by fluctuations (gauge perturbations) we obtain a family of gauge 
perturbed Dirac operators $D+A + \epsilon' JAJ^{-1}$ parametrized by 
$(1-\phi) d_1$. It follows then from \eqref{dist} and \eqref{[D,e]3} that 
$$
d_{D+A + \epsilon' JAJ^{-1}}= \frac{1}{|1-\phi|}\, d_D.
$$

Concerning the perturbation of the type \eqref{chirg}
we have the following observation
\begin{remark}
The family of chiral gauge perturbations
$D+ \gamma A + \epsilon' J \gamma A J^{-1}$
with $A=-A^*\in \Omega^1_D$, 
is equal to the family of usual gauge perturbations of $D$. $\diamond$
\end{remark}

Now we shall consider a family of conformally rescaled Dirac operators. 
First of all, observe that there is no difference whether we use the rescaling
by an element from the algebra or an element from the commutant. 

\begin{lemma}
Let $k = \zeta ( \rho e + (1-\rho) (1-e) )$ and $h = \xi ( (1-\rho) e + \rho (1-e))$, where
$\zeta, \xi >0$ and $0\leq \rho \leq 1$.
Let $k_J = JkJ^{-1}$.
Then the conformal rescalings
$$ D_{k_J} = k_J D k_J, \qquad D_h = h D h, $$
 of $D$ are identical provided that $\zeta^2 \rho = (\xi)^2 (1-\rho)$ and they 
correspond to the same twist:
\be\label{nu3}
\nu = \left( \begin{array}{cccc} 
1 &  0 &  0 \\
0 &  \frac{1-\rho}{\rho} & 0  \\
0 & 0 & \frac{\rho}{1-\rho} 
 \end{array}  \right).
\ee
\end{lemma}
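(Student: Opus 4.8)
The plan is to make everything explicit by writing $k$, $h$, and $J$ as concrete $3\times 3$ matrices (composed with $*$ for $J$), and then simply computing the two conjugations $k_J D k_J$ and $hDh$ and comparing. First I would record that in the chosen basis $k=\zeta\,\mathrm{diag}(\rho,\rho,1-\rho)$ and $h=\xi\,\mathrm{diag}(1-\rho,1-\rho,\rho)$, since $e=\mathrm{diag}(1,1,0)$. Next I would compute $k_J=JkJ^{-1}$: conjugating a diagonal real matrix by $J=P\circ *$, where $P$ is the transposition of the last two basis vectors, just permutes the corresponding diagonal entries (the $*$ acts trivially on real diagonal matrices), so $k_J=\zeta\,\mathrm{diag}(\rho,1-\rho,\rho)$. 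This is the key simplifying observation: $k_J$ is again diagonal, hence $k_J D k_J$ is obtained from $D$ entrywise by multiplying the $(i,j)$ entry by $(k_J)_{ii}(k_J)_{jj}$.

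Then I would carry out the entrywise comparison. Using the explicit $D$ with nonzero entries only in positions $(1,2),(2,1),(1,3),(3,1)$ (namely $d_2,d_2^*,d_1,d_1^*$), the operator $k_J D k_J$ has $(1,2)$-entry $\zeta^2\rho(1-\rho)\,d_2$ and $(1,3)$-entry $\zeta^2\rho^2\,d_1$ (up to the symmetric conjugates), while $hDh$ has $(1,2)$-entry $\xi^2(1-\rho)^2 d_2$ and $(1,3)$-entry $\xi^2\rho(1-\rho)\,d_1$. The $(1,2)$ entries match iff $\zeta^2\rho=\xi^2(1-\rho)$, and one checks that the very same condition forces the $(1,3)$ entries to agree (both become $\zeta^2\rho^2 d_1 = \xi^2\rho(1-\rho)d_1$ after using the relation). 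So under the stated hypothesis $\zeta^2\rho=\xi^2(1-\rho)$ the two rescaled Dirac operators coincide. For the twist, I would invoke the formula $\nu(h)=k^{-1}k_J\,(h)$ from the recalled result in Section~2: since $k^{-1}=\zeta^{-1}\mathrm{diag}(\rho^{-1},\rho^{-1},(1-\rho)^{-1})$ and $k_J=\zeta\,\mathrm{diag}(\rho,1-\rho,\rho)$, their product is $\mathrm{diag}(1,\frac{1-\rho}{\rho},\frac{\rho}{1-\rho})$, which is exactly \eqref{nu3}; and one can double-check that starting instead from $h$ with its own commutant-rescaling prescription yields the same $\nu$, consistent with the two Dirac operators being equal.

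The only genuinely delicate point is the claim that a single scalar relation ($\zeta^2\rho=\xi^2(1-\rho)$) suffices to match \emph{all} entries simultaneously — a priori one might fear the $(1,2)$ and $(1,3)$ blocks impose independent constraints. The reason it works is that both off-diagonal "arms" of $D$ emanate from the first basis vector (the $+1$ eigenspace of $\gamma$) and land in the $-1$ eigenspace, so under $k_J$ each gets the factor $(k_J)_{11}$ times $(k_J)_{jj}$ with $j\in\{2,3\}$, and since $(k_J)_{22}=(k_J)_{33}=\zeta\rho$ happen to be equal — wait, they are $1-\rho$ and $\rho$ respectively; but the point is that the ratio of the scaling factors on the $(1,2)$ and $(1,3)$ entries is $(k_J)_{22}/(k_J)_{33}=(1-\rho)/\rho$ for $k_J$ and $(h)_{22}/(h)_{33}=(1-\rho)/\rho$ for $h$ as well, so the two calculi scale the two arms by proportional amounts, and one scalar normalization is all that is left to fix. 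Thus I would present the proof as: (i) explicit matrix forms, (ii) $k_J$ and $h$ are diagonal with proportional entries, (iii) entrywise comparison giving the stated condition, (iv) read off $\nu$ from $k^{-1}k_J$. I expect step (iii) to be the part requiring the most care, but it is a short finite computation.
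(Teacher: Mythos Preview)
Your proposal is correct and follows essentially the same route as the paper: write $k$, $h$, and $k_J$ explicitly as diagonal $3\times 3$ matrices, compute $k_J D k_J$ and $hDh$ entrywise, and observe that both nontrivial entries impose the single relation $\zeta^2\rho=\xi^2(1-\rho)$; the twist $\nu=k^{-1}k_J$ is then read off directly. Your extra remark explaining \emph{why} one scalar relation suffices (the ratios $(k_J)_{22}/(k_J)_{33}$ and $h_{22}/h_{33}$ both equal $(1-\rho)/\rho$) is a pleasant clarification that the paper leaves implicit.
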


\begin{proof}
The proof  is by explicit computation. First:
$$ 
k_J  = \zeta \left( \begin{array}{ccc} 
 \rho & 0 & 0 \\
0 & (1-\rho) & 0  \\
0 & 0 & \rho  
 \end{array} \right), 
 \qquad
h =  \xi \left( \begin{array}{ccc} 
(1-\rho) & 0 & 0 \\
0 & \rho & 0  \\
0 & 0 & (1-\rho)  
 \end{array} \right), 
 $$
 out of which it is easy to see that indeed:
 $$
 D_{k_J}  = \zeta^2 \left( \begin{array}{ccc} 
 0 & \rho(1-\rho) \epsilon' d_1^* & \rho^2 d_1 \\
 \rho(1-\rho)\epsilon' d_1 & 0 & 0 \\
  \rho^2 d_1^* & 0 & 0  
 \end{array} \right), $$
 $$
 D_h  = (\xi)^2 \left( \begin{array}{ccc} 
 0 & (1-\rho)^2 \epsilon' d_1^* & \rho (1-\rho) d_1 \\
 (1-\rho)^2 \epsilon' d_1 & 0 & 0 \\
  \rho (1-\rho) d_1^* & 0 & 0  
 \end{array} \right).
 $$
The formula for the twists follows directly.
\end{proof}

As a result we can consider just one type of twists and using a general selfadjoint one-form \eqref{A3}  perform 
now fluctuations (gauge perturbations) of $D_{k_J}$ 
by 
$$ A + \epsilon' \nu JAJ^{-1} \nu = 
- \zeta^2 \left( \begin{array}{ccc} 
 0 & \epsilon'  \rho(1-\rho) \phi^* d_1^*  & \rho^2 \phi d_1 \\
\epsilon'  \rho(1-\rho) \phi d_1 & 0 & 0  \\
\rho^2 \phi^* d_1^* & 0 & 0  
\end{array} \right)$$
which are again parametrized by  $\phi\in \C$.

Since the overall factor $\zeta$ can be incorporated into a redefined parameter $\phi$, by the 
composition of  fluctuations (gauge perturbations)  with conformal rescalings of (untwisted) real 
nondegenerate (that is $d_1 \not= 0$) 
spectral triple, we obtain generically a 3-dimensional (real) parameter family of 
Dirac operators and thus of (twisted) spectral triples. 

We remark that similarly to the untwisted case the chiral gauge perturbations are identical with the usual ones.

Note also that the distance formula is only rescaled by $\rho \zeta$:
\be
\label{dist3}
d_{D_{k_J}}= \frac{1}{\|[D_{k_J},e]\|}= \frac{1}{\rho^2 \zeta^2 |d_1|},
\ee
however, the place where the twist appears is in the respective powers of
the Dirac operator:

$$
 (D_{k_J})^2  = \zeta^4 d_1 d_1^* \left( \begin{array}{ccc} 
  \rho^2 ((1-\rho)^2 + \rho^2) & 0 & 0\\
  0 & \rho^2 (1-\rho)^2 & 0 \\
  0 & 0 & \rho^4   
 \end{array} \right).
 $$

\subsection{The permutation twist}

In Definition \ref{def.reality} we adopted the usual assumption about the twist $\nu$ that $\nu^{-1} \pi(a) \nu$ is $\pi(\bar{\nu}(a))$, 
where $\bar\nu$ is an automorphism of the algebra. In our the case with the chosen representation there are
no such nontrivial maps $\nu$, however we can depart slightly from this assumption if 
the automorphism $\nu$ satisfies $\nu^2=1$.
In that case the relation (\ref{to1c}) is automatically  satisfied. If we take
$$
\nu = \left( \begin{array}{ccc} 
1 &  0 &  0 \\
0 & 0 & 1  \\ 
0 & 1 & 0  
 \end{array}  \right),
$$
then also the twisted regularity condition 
$\nu J \nu = J$
is satisfied.\\

Furthermore, we have:
$$ 
 D J \nu  = 
 \left( \begin{array}{ccc} 
 0 & d_2 & d_1 \\
 d_2^* & 0 & 0  \\
 d_1^* & 0 & 0   
 \end{array} \right) 
 \circ *, \quad \quad
 \nu J D  = 
 \left( \begin{array}{ccc} 
 0 &   d_2^* &  d_1^*  \\
 d_2 & 0 & 0  \\
 d_1 & 0 & 0  
 \end{array} \right) 
  \circ *, $$
 so, imposing the twisted $\epsilon'$-condition, $DJ \nu = \epsilon'\nu  JD$, we obtain:
$$ d_2^* = \epsilon' d_2, \quad \quad d_1^* = \epsilon' d_1. $$
Hence the family of $\nu$-real Dirac operators is parametrized 
by $(d_1,d_2) \in \R$ (for $\epsilon'=1$) or $(d_1,d_2) \in i\R$ (for $\epsilon'=-1$):
$$
\left( \begin{array}{cccc} 
 0 &  d_2 & d_1  \\
 \epsilon\rq{} d_2 & 0 & 0  \\
 \epsilon\rq{} d_1 & 0 & 0  
 \end{array} \right).
$$

One can similarly as before compute the family of gauge fluctuated Dirac operators, 
which is parametrized by a complex number $\phi$ and amounts to the change: 
$$(d_1,d_2) \to ((1-\phi - \phi^*) d_1, d_2).$$ 

The chiral gauge fluctuations are again exactly the same, so we see that only one 
parameter of the Dirac operator is changed via all possible gauge transformations.

The formula for the distance for the fluctuated $D$ is, 
$$
d_{D+A + \epsilon' \nu JAJ^{-1} \nu}= 
\frac{1}{\max\{|1-\phi - \phi^*| |d_1|, |d_2| \}}.
$$

\section{The spectral triples on $\C^4$}

The next possibility of a low-dimensional spectral triple, which is irreducible is with
of $A_2$ on $\C^4$ and with the $\Z_2$-grading $\gamma$ such that $A_2$ has an
irreducible representation on each eigenspace of $\gamma$:
$$
\gamma = \left( \begin{array}{cccc} 
1 & 0 & 0 & 0 \\ 0 & -1 & 0 & 0 
\\ 0 & 0 & -1 & 0 \\ 0 & 0 & 0 & 1  
\end{array}  \right),
\qquad 
A_2 \ni a = \left( \begin{array}{cccc} c_+ & 0 & 0 & 0\\   0 & c_+ & 0 & 0 
\\ 0 & 0 & c_- & 0 \\ 0 & 0 & 0& c_- \end{array}  \right).
$$
The Dirac operator is a priori an arbitrary selfadjoint matrix in $M_4(\C)$, however, using
its anti-commutation relation with $\gamma$ we are restricted to the following form:

$$ 
 D  = \left( \begin{array}{cccc} 
 0 & d_3 & d_1 & 0 \\
 d_3^* & 0 & 0 & d_2 \\
 d_1^* & 0 & 0 & d_4 \\
 0 & d_2^* & d_4^* & 0 
 \end{array} \right),
$$
where $d_1, d_2, d_3, d_4 \in\C$.

\subsection{The distance formula}
Before we pass to real structures and the restrictions on $D$ imposed by \eqref{to1c} and \eqref{tc} we compute 
the one-forms $A\in \Omega^1_D$ associated to $D$. For that it suffices to compute
\be\label{[D,e]}
 [D,e] = \left( \begin{array}{cccc} 
 0 & 0 & -d_1 & 0 \\
 0 & 0 & 0 & -d_2 \\
 d_1^* & 0 & 0 & 0 \\
 0 & d_2^* & 0 & 0 
 \end{array} \right),
\ee
where 
$$ e =  \left( \begin{array}{cccc} 
1 & 0 & 0 & 0 \\ 0 & 1  & 0 & 0 
\\ 0 & 0 & 0 & 0 \\ 0 & 0 & 0 & 0  
 \end{array}  \right), $$
together with $1$, forms the basis of $A_2$.
Then it follows that a general one-form $A$ can be parametrized by 
two complex numbers $\phi_1, \phi_2$ as follows 
$$ A = \phi_1 e [D, e] + \phi_2 (1 - e) [D,e] = 
 \left( \begin{array}{cccc} 
 0 & 0 & -\phi_1 d_1 & 0 \\
 0 & 0 & 0 & - \phi_1 d_2 \\
 \phi_2 d_1^* & 0 & 0 & 0\\
 0 & \phi_2 d_2^* & 0 & 0 
 \end{array} \right) . 
$$
The selfadjoint $A=A^*$ correspond to $\phi_1 = -\phi_2^*=:\phi $.

We also note that for $a = c_+ e + c_- (1-e)$:
\be [D, a] = (c_+-c_-)[D, e]. \ee
The norm of $[D,a]$ is
$$ || [D,a] || = |c_+ - c_-|\, \max\{ |d_1|, |d_2| \}, $$
It follows then that the distance \eqref{dist} between the two points equals 
\be\label{dist4}
d_D= \frac{1}{\|[D,e]\|}= \frac{1}{\max\{ |d_1|, |d_2| \}}.
\ee

\subsection{The real structure on $\C^4$.}

Again, as in the case previous case we look for possible $J$, antilinear isometries
that map the algebra into the genuine commutant.
Then, up to a unitary transform such $J$ takes the form
$$ 
J = \left( \begin{array}{cccc} 
  1 & 0 & 0 & 0 
\\ 0 & 0  & 1&  0 
\\ 0 & 1 & 0 & 0 
\\ 0 & 0 & 0 & 1  
 \end{array}  \right) \circ *.
$$
It satisfies $J^2=\id$ and $J \gamma = \gamma J$.
We note that $Ja^*J$ becomes
$$ 
J a^* J = \left( \begin{array}{llll} 
c_+ & 0 & 0 & 0 \\ 0 & c_-  & 0 & 0 
\\ 0 & 0 & c_+ & 0 \\ 0 & 0 & 0 & c_-  
 \end{array}  \right).
$$

Now, concerning the order one condition, it suffices  to be checked only for $a=b=e$. 
Indeed a simple computation shows that it is satisfied:
$$ [[D, e], JeJ^{-1}] = 0. $$

Next we shall impose the (possibly twisted) commutation relations 
\eqref{tc} and  \eqref{treg} separately for the untwisted and twisted
cases.

\subsection{Real spectral triples}

We begin with the untwisted case, first 
$$ 
 D J  = 
 \left( \begin{array}{cccc} 
 0 & d_1 & d_3 & 0 \\
 d_3^* & 0 & 0 & d_2 \\
 d_1^* & 0 & 0 & d_4 \\
 0 & d_4^* & d_2^* & 0 
 \end{array} \right) \circ *, \quad \quad
 J D  = 
 \left( \begin{array}{cccc} 
 0 & d_3^* & d_1^* & 0 \\
 d_1 & 0 & 0 & d_4^* \\
 d_3 & 0 & 0 & d_2^* \\
 0 & d_2 & d_4 & 0 
 \end{array} \right)  \circ *, $$
 so, imposing the commutation relation $JD = \epsilon' DJ$ (the $\epsilon'$-condition), 
we obtain:
 $$ d_3 = \epsilon' d_1^*, \quad \quad d_4 = \epsilon' d_2^*. $$
 
Having now fixed $d_1$ and $d_2$ we can consider the space of all real perturbations
of the Dirac operator by one forms. The associated space of one forms $A$ can 
be parametrized by two complex numbers $\phi_1, \phi_2$, however a selfadjoint 
one-form has $\phi_1 = -\phi_1^*=:\phi $, and we have

$$ A = \left( \phi e - \phi^* (1 - e)\right) [D,e] = 
- \left( \begin{array}{cccc} 
 0 & 0 & \phi d_1 & 0 \\
 0 & 0 & 0 &  \phi d_2 \\
 \phi^* d_1^* & 0 & 0 & 0\\
 0 & \phi^* d_2^* & 0 & 0 
 \end{array} \right) 
$$

and the real perturbation:
$$ A + \epsilon' JAJ^{-1} = 
- \left( \begin{array}{cccc} 
 0 & \epsilon' \phi^* d_1^*  & \phi d_1& 0 \\
\epsilon' \phi d_1 & 0 & 0 & \phi d_2 \\
 \phi^* d_1^* & 0 & 0 & \epsilon' \phi^* d_2^*\\
 0 & \phi^* d_2^* & \epsilon' \phi d_2& 0 
 \end{array} \right).$$
 
Hence starting from the Dirac operator $D$ with parameters $(d_1,d_2)$, 
by fluctuations (gauge perturbations) we obtain a family of gauge perturbed 
Dirac operators $D+A + \epsilon' JAJ^{-1}$ parametrized by 
$((1-\phi) d_1, (1-\phi) d_2)$.
It follows then from \eqref{[D,e]} and \eqref{dist4} that 
$$
d_{D+A + \epsilon' JAJ^{-1}}= \frac{1}{|1-\phi|}\, d_D.
$$
 
The chiral gauge perturbations \eqref{chirg} with $A=-A^*\in \Omega^1_D$, become
$$
\gamma A + \epsilon' J \gamma A J^{-1}=
- \left( \begin{array}{cccc} 
 0 & \epsilon' \phi^* d_1^*  & \phi d_1& 0 \\
\epsilon' \phi d_1 & 0 & 0 & -\phi d_2 \\
 \phi^* d_1^* & 0 & 0 & - \epsilon' \phi^* d_2^*\\
 0 & - \phi^* d_2^* & - \epsilon' \phi d_2& 0 
 \end{array} \right),
$$
and we see that starting from the Dirac operator $D$ with parameters 
$(d_1,d_2)$, the chiral fluctuations (gauge perturbations) lead to a family 
parametrized by $((1-\phi) d_1, -(1-\phi) d_2)$.

We can sum up the results of this subsection as
\begin{lemma}
In the real spectral triple over two points represented on $\C^4$
the entire family of Dirac operators ($D \neq 0$) is obtained through 
gauge fluctuations and chiral gauge fluctuations 
starting from a nondegenerate spectral triple ($d_1 \not= 0$).
\end{lemma}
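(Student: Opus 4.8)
The plan is to reduce the statement to an elementary computation in the two–complex–parameter space of Dirac operators and to show that the combined fluctuation moves act there as arbitrary coordinatewise rescalings. After imposing $J$ and the $\epsilon'$-condition the admissible Dirac operators are exactly those with $d_3=\epsilon' d_1^*$ and $d_4=\epsilon' d_2^*$, so the family is parametrised faithfully by $(d_1,d_2)\in\C^2$, with $D\neq0$ meaning $(d_1,d_2)\neq(0,0)$. I would first fix the meaning of nondegeneracy: the starting triple must have $d_1\neq0$ \emph{and} $d_2\neq0$. This is genuinely needed, since — as the two displayed perturbation matrices show — every gauge or chiral fluctuation multiplies $d_1$ and $d_2$ by scalars, hence can never turn a vanishing component into a nonvanishing one; a start with, say, $d_2=0$ could never reach any $D$ with $d_2\neq0$.

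Next I read off the two induced actions on the pair $(d_1,d_2)$ directly from the perturbation matrices computed just above. The gauge perturbation sends $(d_1,d_2)\mapsto\big((1-\phi)d_1,(1-\phi)d_2\big)$, a common factor on both entries. The chiral perturbation matrix has $(1,3)$-entry $-\phi d_1$ and $(2,4)$-entry $+\phi d_2$, so adding it to $D$ sends $(d_1,d_2)\mapsto\big((1-\phi)d_1,(1+\phi)d_2\big)$ — crucially the two entries are rescaled by the \emph{independent} factors $1-\phi$ and $1+\phi$. In both cases one checks that the reality relations $d_3'=\epsilon'(d_1')^*$, $d_4'=\epsilon'(d_2')^*$ persist, so each fluctuated datum is again a member of the same family with the same $J$ and $\gamma$; hence fluctuations may be composed freely (the gauge ones already form a semigroup, and the chiral ones land in the same family).

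The engine of the proof is then purely algebraic: the submonoid of $(\C,\times)\times(\C,\times)$ generated, under coordinatewise multiplication, by the diagonal $\{(\mu,\mu):\mu\in\C\}$ and by $\{(1-\phi,1+\phi):\phi\in\C\}$ is all of $\C^2$. Indeed, composing one gauge and one chiral fluctuation realises $(d_1,d_2)\mapsto\big(\mu(1-\psi)d_1,\ \mu(1+\psi)d_2\big)$; to hit a target $(\alpha d_1,\beta d_2)$ I solve $\mu(1-\psi)=\alpha$ and $\mu(1+\psi)=\beta$, giving $\mu=(\alpha+\beta)/2$ and $\psi=(\beta-\alpha)/(\alpha+\beta)$, which is legitimate whenever $\alpha+\beta\neq0$ (the intermediate triple $(\mu d_1,\mu d_2)$ being nondegenerate). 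The residual locus $\alpha+\beta=0$ with $\alpha\neq0$ I reach by composing \emph{two} chiral fluctuations: writing $s=1-\phi$, $s'=1-\phi'$ one needs $ss'=\alpha$ and $(2-s)(2-s')=-\alpha$, which forces $s+s'=2+\alpha$, so $s,s'$ are the roots of $t^2-(2+\alpha)t+\alpha=0$; this quadratic always splits over $\C$ and has no root equal to $0$ or $2$, so the intermediate step stays nondegenerate. Therefore, starting from a nondegenerate $(d_1^0,d_2^0)$, every $(\alpha d_1^0,\beta d_2^0)$ with $(\alpha,\beta)\neq(0,0)$ is attained, and since $d_1^0,d_2^0\neq0$ this is precisely every $(d_1,d_2)\neq(0,0)$ — the entire family of nonzero Dirac operators.

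The step I expect to be the main obstacle — and the one on which the whole statement turns — is the observation that the chiral fluctuation rescales $d_1$ and $d_2$ by \emph{different} factors $1-\phi$ and $1+\phi$. Were these two factors equal (as for the gauge move) or merely sign-related, the gauge and chiral orbits would each be a single complex line through the origin and their composites would remain inside the union of at most two such lines, falling far short of $\C^2$; it is exactly the linear independence of $1-\phi$ and $1+\phi$ that lets the composite moves sweep out the full plane. The remaining care-points are minor but should be spelled out: handling the exceptional locus $\alpha+\beta=0$ by the two-chiral argument, verifying that the intermediate triples stay nondegenerate so that $\Omega^1$ does not collapse between steps, and flagging that genuine nondegeneracy $d_1^0 d_2^0\neq0$ (not merely $d_1^0\neq0$) is indispensable.
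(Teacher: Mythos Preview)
Your argument is correct, and in fact more careful than the paper, which offers no proof at all: the lemma is simply announced as a summary of the subsection's computations. Two points are worth recording. First, your reading of the chiral perturbation as $(d_1,d_2)\mapsto\big((1-\phi)d_1,(1+\phi)d_2\big)$ is what the paper's displayed matrix actually gives; the paper's subsequent line ``parametrized by $((1-\phi) d_1, -(1-\phi) d_2)$'' is inconsistent with that matrix and appears to be a slip --- with that wrong formula the lemma would in fact fail, since gauge and chiral moves would then only reach the two complex lines $\C\cdot(d_1,d_2)$ and $\C\cdot(d_1,-d_2)$. Second, your insistence that ``nondegenerate'' must mean $d_1\neq0$ \emph{and} $d_2\neq0$ is a genuine correction: as you note, both moves act by scalar multiplication on each coordinate separately, so a start with $d_2=0$ can never reach any $D$ with $d_2\neq0$, contrary to the paper's stated hypothesis $d_1\neq0$ alone. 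Your treatment of the exceptional locus $\alpha+\beta=0$ via two successive chiral moves, and your check that intermediate triples stay nondegenerate, are exactly the care-points the paper omits.
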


\subsection{The family of twisted real spectral triples.}

Since the reality operator is in fact very similar to the one in the $\C^3$ case,
we find a similar property concerning conformal scaling by an element from
the algebra and the one from the commutant. 

\begin{lemma}
Let $k = \zeta ( \rho e + (1-\rho) (1-e) )$ and $h = \xi ( (1-\rho) e + \rho (1-e))$,
where
$\zeta, \xi >0$ and $0\leq \rho \leq 1$.
Let $k_J = JkJ^{-1}$.
Then the conformal rescalings 
$$ D_{k_J} = k_J D k_J, \qquad D_h = h D h, $$
of $D$ are identical provided that $\zeta^2 \rho = (\xi)^2 (1-\rho)$ and they 
correspond to the same twist:
$$
\nu = \left( \begin{array}{cccc} 
1 &  0 & 0 & 0 \\\rho
0 &  \frac{1-\rho}{\rho} & 1 & 0  \\
0 & 0 & \frac{\rho}{1-\rho} & 0 \\ 
0 & 0  & 0 & 1 
 \end{array}  \right).
$$
\end{lemma}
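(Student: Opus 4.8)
The plan is to prove this exactly as its $\C^3$ analogue was proved: every object in play --- $k$, $h$, $J$, $D$, $\gamma$ --- is a concrete $4\times 4$ matrix, so the statement should reduce to a direct computation. First I would record $k_J=JkJ^{-1}$. Since $J=P\circ *$ with $P$ the transposition of the second and third basis vectors, and $k=\zeta(\rho e+(1-\rho)(1-e))=\zeta\,\mathrm{diag}(\rho,\rho,1-\rho,1-\rho)$ is positive and real-diagonal, conjugation by $J$ merely interchanges the second and third diagonal entries, so $k_J=\zeta\,\mathrm{diag}(\rho,1-\rho,\rho,1-\rho)$; likewise $h=\xi((1-\rho)e+\rho(1-e))=\xi\,\mathrm{diag}(1-\rho,1-\rho,\rho,\rho)$.

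Next, using the elementary fact that conjugating the off-diagonal matrix $D$ by $\mathrm{diag}(t_1,t_2,t_3,t_4)$ multiplies its $(i,j)$ entry by $t_it_j$, I would write $D_{k_J}=k_JDk_J$ and $D_h=hDh$ out in full. By the $\epsilon'$-condition already imposed one has $d_3=\epsilon'd_1^*$ and $d_4=\epsilon'd_2^*$, so it suffices to compare the two entries carrying $d_1$ (positions $(1,2)$ and $(1,3)$) and the two carrying $d_2$ (positions $(2,4)$ and $(3,4)$). The $d_1$-entries of $D_{k_J}$ and $D_h$ coincide precisely when $\zeta^2\rho=\xi^2(1-\rho)$, which is the asserted condition; under it the full matrices are equal. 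The twist $\nu=k^{-1}k_J=\mathrm{diag}\!\left(1,\tfrac{1-\rho}{\rho},\tfrac{\rho}{1-\rho},1\right)$ is then read off directly, the same $\nu$ arising from the $h$-rescaling via the corresponding formula; and since $\gamma$ is diagonal it commutes with both $k_J$ and $h$, so $\gamma D_{k_J}=-D_{k_J}\gamma$ holds automatically and the grading is untouched. That the resulting datum is indeed a $\nu$-twisted real spectral triple is the general fact recalled above and needs no separate argument here.

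The step I expect to be the real obstacle is checking that the $d_2$-entries are compatible with this same single relation. Under $k_J$ the $(2,4)$- and $(3,4)$-entries pick up the weights $(1-\rho)^2$ and $\rho(1-\rho)$, whereas under $h$ they pick up $\rho(1-\rho)$ and $\rho^2$; equating them gives the reciprocal relation $\zeta^2(1-\rho)=\xi^2\rho$, which together with the $d_1$-constraint would force $\rho=\tfrac12$. Reconciling the two blocks is thus the heart of the matter, and I would handle it by pinning down precisely which Dirac operators the lemma is meant to cover --- presumably those supported on the $\{1,2,3\}$-block (i.e. $d_2=0$), so that one is simply repeating the $\C^3$ computation with a decoupled trivial summand, or else allowing $h$ and $k_J$ the reversed weights $\rho\leftrightarrow 1-\rho$ on the lower block, which makes both blocks simultaneously consistent while leaving $\nu$ of exactly the displayed diagonal form. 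Once that point is settled, what remains is routine verification.
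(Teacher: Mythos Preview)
Your approach is exactly the one the paper intends: the paper in fact \emph{omits} this proof entirely (``We skip the computational proof''), merely displaying $D_{k_J}$ afterwards, so the only thing to compare against is the $\C^3$ computation you are mimicking. Your matrix identifications of $k$, $k_J$, $h$ and of the rescaled Dirac operators are all correct, and the resulting $D_{k_J}$ agrees with the formula the paper writes down after the lemma.

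More to the point, the obstacle you flag in the last paragraph is real, not a gap in your argument. With $k_J=\zeta\,\mathrm{diag}(\rho,1-\rho,\rho,1-\rho)$ and $h=\xi\,\mathrm{diag}(1-\rho,1-\rho,\rho,\rho)$, equality of the $d_1$-entries gives $\zeta^2\rho=\xi^2(1-\rho)$ while equality of the $d_2$-entries gives the reciprocal relation $\zeta^2(1-\rho)=\xi^2\rho$; for $d_2\neq 0$ these are simultaneously satisfiable only at $\rho=\tfrac12$. Since the paper suppresses the computation it never confronts this; the lemma, read literally for generic $d_1,d_2$ and $\rho$, does not hold under the single condition stated. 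Your diagnosis and the two possible repairs you propose (restrict to $d_2=0$, i.e.\ the embedded $\C^3$-block, or swap $\rho\leftrightarrow 1-\rho$ on the lower block of $h$) are both sound ways to make the statement true while preserving the displayed twist $\nu=\mathrm{diag}(1,\tfrac{1-\rho}{\rho},\tfrac{\rho}{1-\rho},1)$. There is nothing to fix in your reasoning; the issue lies in the lemma as stated.
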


We skip the computational proof, and concentrate on the operator $D_{k_J}$, 
$$ 
 D_{k_J}  = \zeta^2 \left( \begin{array}{cccc} 
 0 & \rho(1-\rho) \epsilon' d_1^* & \rho^2 d_1 & 0 \\
 \rho(1-\rho)\epsilon' d_1 & 0 & 0 & (1-\rho)^2 d_2\\
  \rho^2 d_1^* & 0 & 0 &  \rho(1-\rho) d_2^*\epsilon' \\
 0 & (1-\rho)^2 d_2^* &  \rho(1-\rho) \epsilon' d_2 & 0 
 \end{array} \right)
 $$

We can perform now fluctuations (gauge perturbations) of $D_{k_J}$ by the real gauge 
fields one-forms, which are again parametrized by $\phi\in \C$:
$$ A + \epsilon' \nu JAJ^{-1} \nu = 
- \zeta^2 \left( \begin{array}{cccc} 
 0 & \epsilon'  \rho(1-\rho) \phi^* d_1^*  & \rho^2 \phi d_1& 0 \\
\epsilon'  \rho(1-\rho) \phi d_1 & 0 & 0 & (1-\rho)^2 \phi d_2 \\
\rho^2 \phi^* d_1^* & 0 & 0 & \epsilon'  \rho(1-\rho) \phi^* d_2^*\\
 0 & (1-\rho)^2  \phi^* d_2^* & \epsilon'  \rho(1-\rho) \phi d_2& 0 
 \end{array} \right).$$

We close this section by giving the norm of $[D_{k_J}+A + \epsilon' \nu JAJ^{-1} \nu,a]$
\be
|| [D_{k_J}+A + \epsilon' \nu JAJ^{-1} \nu,a] || = |c_+ - c_-|\, \zeta^2\, |1-\phi|\max\{\rho^2|d_1|,(1-\rho)^2|d_2| \} \}, 
\ee

It follows then that the distance between the two points is:
\be 
d_{D_{k_J}+A + \epsilon' \nu JAJ^{-1} \nu}=  \frac{1}{\|[D,e]\|}= \frac{1}{ \zeta^2 |1-\phi|\max\{\rho^2|d_1|,(1-\rho)^2|d_2| \}}.
\ee

\subsection{The permutation twist} 

We shall employ here the twist automorphism 
$$
\nu = \left( \begin{array}{cccc} 
0 &  0 & 0 & 1 \\
0 &  0 & 1 & 0  \\
0 & 1 & 0 & 0 \\ 
1 & 0  & 0 & 0 
 \end{array}  \right),
$$
which is involutive $\nu^2=\id$ and implements the automorphism that permutes (exchanges) $c_-$ and  $c_+$ in $a\in A$.

A simple calculation shows that the twisted regularity condition
$$
\nu J\nu = J
$$
is satisfied.

We remark that the twist automorphism 
$$
\nu' = \left( \begin{array}{cccc} 
0 &  0 & 1 & 0 \\
0 &  0 & 0 & 1  \\
1 & 0 & 0 & 0 \\ 
0 & 1  & 0 & 0 
 \end{array}  \right),
$$
that is also involutive $\nu^2=\id$ and implements the automorphism that permutes (exchanges) $c_-$ and  $c_+$ in $a\in A$, is not suitable as it does not satisfy the
twisted regularity condition. 

Furthermore, we have:
$$ 
 D J\nu  = 
 \left( \begin{array}{cccc} 
 0 & d_3 & d_1 & 0 \\
 d_2 & 0 & 0 & d_3^* \\
 d_4 & 0 & 0 & d_1^* \\
 0 & d_2^* & d_4^* & 0 
 \end{array} \right) \circ *, \quad \quad
 \nu J D  = 
 \left( \begin{array}{cccc} 
 0 &   d_2 &  d_4 & 0 \\
 d_3 & 0 & 0 & d_2^* \\
 d_1 & 0 & 0 & d_4^* \\
 0 & d_3^* & d_1^* & 0 
 \end{array} \right) 
  \circ *, $$
 so, imposing the twisted $\epsilon'$-condition,
  $DJ \nu = \epsilon'\nu  JD$, we obtain:
 $$ d_3 = \epsilon' d_2, \quad \quad d_4 = \epsilon' d_1. $$
Hence the family of $\nu$-real Dirac operators is: 
$$
\left( \begin{array}{cccc} 
 0 & \epsilon\rq{} d_2 & d_1 & 0 \\
 \epsilon\rq{} d_2^* & 0 & 0 & d_2 \\
 d_1^* & 0 & 0 & \epsilon\rq{} d_1 \\
 0 & d_2^* & \epsilon\rq{} d_1^* & 0 
 \end{array} \right).
$$

Similarly as in the untwisted case the family of gauge fluctuated
Dirac operators is parametrized by a complex number 
$\phi$ and amounts to the change: 
$$(d_1,d_2) \to ((1-\phi) d_1, (1-\phi) d_2).$$

The formula for the distance is 
$$
d_{D+A + \epsilon' \nu JAJ^{-1} \nu}= 
\frac{1}{|1-\phi|\max\{|d_1|,|d_2| \}}.
$$

\subsection{Composition of twists}

Finally, since in the case of the $A_2$ algebra we have two different types of twists, the conformal 
twists and the permutation twist (in the $\C^4$ representation case) there is a natural question,
whether these twists can be composed. However, explicit computations show:

\begin{lemma}
The composition of the conformal twists with the permutation twists does not satisfy
the twisted regularity condition (\ref{treg}).
\end{lemma}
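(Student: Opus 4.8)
The plan is to establish the failure directly, by computing $\nu J\nu$ for the composed twist and showing it differs from $J$ unless the conformal factor is trivial. First I would fix notation on $\C^4$: write $J=P\circ *$ with $P$ the matrix displayed in the real‑structure subsection (the transposition of the second and third coordinates), write $\nu_p$ for the permutation twist of the previous subsection (the order‑reversing permutation, i.e.\ the product of the transpositions $(1\,4)$ and $(2\,3)$), and write $\nu_\rho$ for the conformal twist attached to the parameter $\rho$, which by the conformal rescaling lemma is the diagonal operator $\mathrm{diag}\!\left(1,\tfrac{1-\rho}{\rho},\tfrac{\rho}{1-\rho},1\right)$ (the scale $\zeta$ cancels in $k^{-1}k_J$). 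Both $\nu_p$ and $\nu_\rho$ have real entries, are bounded, invertible and selfadjoint, and each one separately satisfies twisted regularity: for $\nu_p$ this was verified in the previous subsection, and for $\nu_\rho$ it is part of the conformal rescaling lemma (equivalently $\nu_\rho P\nu_\rho=P$, since $P$ swaps the second and third slots and $(\nu_\rho)_{22}(\nu_\rho)_{33}=1$). The ``composition of the twists'' means $\nu=\nu_\rho\nu_p$, or $\nu=\nu_p\nu_\rho$; I would treat the first and then indicate that the second is identical.

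Next I would reduce the twisted regularity condition \eqref{treg} to a plain matrix identity. Because $\nu$ has real entries and $J=P\circ *$ is antilinear, $\nu J\nu\,\xi=\nu P\,\overline{\nu\xi}=\nu P\nu\,\bar\xi$, so \eqref{treg} for $\nu$ holds if and only if $\nu P\nu=P$. I would then rephrase the two individual regularities as intertwining relations, $J\nu_p=\nu_p J$ (using $\nu_p^2=\id$) and $J\nu_\rho=\nu_\rho^{-1}J$, and push $J$ to the right:
$$\nu J\nu=\nu_\rho\nu_p\,J\,\nu_\rho\nu_p=\nu_\rho\,\bigl(\nu_p\,\nu_\rho^{-1}\,\nu_p\bigr)\,J.$$
The one computation that matters is $\nu_p\nu_\rho^{-1}\nu_p$: conjugation by the order‑reversing permutation $\nu_p$ reverses the diagonal of a diagonal matrix, so it sends $\nu_\rho^{-1}=\mathrm{diag}\!\left(1,\tfrac{\rho}{1-\rho},\tfrac{1-\rho}{\rho},1\right)$ back to $\mathrm{diag}\!\left(1,\tfrac{1-\rho}{\rho},\tfrac{\rho}{1-\rho},1\right)=\nu_\rho$. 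Hence $\nu J\nu=\nu_\rho^{2}\,J$.

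Finally I would conclude: $\nu_\rho^{2}=\mathrm{diag}\!\left(1,\big(\tfrac{1-\rho}{\rho}\big)^{2},\big(\tfrac{\rho}{1-\rho}\big)^{2},1\right)$ equals the identity precisely when $\rho=\tfrac12$, i.e.\ precisely when the conformal twist is trivial (and then the ``composition'' is just $\nu_p$ alone). For any nontrivial conformal twist, therefore, $\nu J\nu=\nu_\rho^{2}J\neq J$, so \eqref{treg} fails; the opposite ordering $\nu=\nu_p\nu_\rho$ gives in the same way $\nu J\nu=\nu_\rho^{-2}J$, with the identical conclusion. I do not expect a genuine obstacle here — the content is a single conjugation identity — the only points requiring care are the antilinearity of $J$ when reducing \eqref{treg} to $\nu P\nu=P$ and the bookkeeping of the composition order; and since twisted regularity is a statement about operators, the obstruction is intrinsic, not an artifact of the chosen basis.
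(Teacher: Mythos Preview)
Your proposal is correct. The paper itself offers no details beyond ``explicit computations show'', so your argument is precisely the computation being alluded to; your packaging via the intertwining relations $J\nu_p=\nu_pJ$ and $J\nu_\rho=\nu_\rho^{-1}J$ (together with $\nu_p\nu_\rho^{-1}\nu_p=\nu_\rho$) is a clean way to obtain $\nu J\nu=\nu_\rho^{\pm 2}J$ without writing out the full $4\times 4$ matrices, and the conclusion that this equals $J$ only for $\rho=\tfrac12$ is exactly what is claimed.
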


\section*{Final remarks}

In this note a preparatory material has been presented for the study of more complicated finite 
dimensional examples, admitting non trivial twist automorphisms. Our interest is mainly in 
the finite spectral triple of the noncommutative version of the standard model, where 
the first order condition requires a particular attention. For that the product of (twisted) real spectral 
triples will need to be introduced and studied. It will be also interesting to study if the twist can 
be helpful with the reality condition satisfied only "up to infinitesimals" for some spectral triples 
on quantum groups. 

In this note, we have worked out for the simplest nontrivial algebra and lowest dimensional 
representations and spectral triples of the two-point space, which often serves as a toy model
for the finite part of Standard Model algebra (two-sheeted space). We have discussed the issue 
of the (usual and twisted) reality, conditions related to the conformal rescaling and to the permutation 
automorphism. The gauge perturbations (or "fluctuations") both usual and chiral have 
been explicitly computed and the dependence of the families of possible (twisted) Dirac
operators have been recast in terms of the fluctuation parameters.

We have demonstrated that even in the simplest possible example of a spectral triple, the twists
and the twisted reality conditions does indeed appear and need to be taken into account. These
preliminary results aim to demonstrated the richness of the the theory as well as serve as the basis
for future studies motivated by the noncommutative description of the elementary particle models.

%\section*{Acknowledgements}
%L. D{a}browski and A. Sitarz acknowledge support of the NCN grant 2012/06/M/ST1/00169. \\

\end{document}